\begin{document}

\title*{Loose ends in a strong force 3-body problem}
\author{Connor Jackman}
\institute{Connor Jackman \at CIMAT, Guanajuato, Mexico \email{connor.jackman@cimat.mx}
}
%
%
\maketitle

\abstract*{Up to symmetries, the orbits of three equal masses under an inverse cube force with zero angular momentum and constant moment of inertia can be reparametrized as the geodesics of a complete, negatively curved metric on a pair of pants. The ends of the pants represent binary collisions. Here we will examine the visibility properties of such negatively curved surfaces, allowing a description of orbits beginning or ending in binary collisions of this 3-body problem.}

\abstract{Up to symmetries, the orbits of three equal masses under an inverse cube force with zero angular momentum and constant moment of inertia can be reparametrized as the geodesics of a complete, negatively curved metric on a pair of pants. The ends of the pants represent binary collisions. Here we will examine the visibility properties of such negatively curved surfaces, allowing a description of orbits beginning or ending in binary collisions of this 3-body problem.}

\section{Introduction}
\label{sec:Intro}

It was noted by Poincar\'e \cite{P} that $N$ point masses subject to an attractive force proportional to the inverse $a^{th}$-power of the mutual distances are especially suited to variational methods when $a\ge 3$, often called \textit{strong force} $N$-body problems. The simplification comes from observing that for such forces, the action of a path passing through a collision is infinite or similarly that the Jacobi-Maupertuis metric (or JM metric for short, see eq. \ref{JM} below) is complete. Consequently, there are less obstacles to applying the direct method: over a class of curves having \textit{finite} action -- provided a minimizing sequence converges to some curve -- an action minimizing curve is collision free, its action being finite. For example, minimizing over certain 'tied' free homotopy classes of curves, one can describe a plethora of periodic orbits in these strong force problems. Our main result here is that for the inverse cube force one may, via the JM-metric, understand certain orbits having binary collisions as well.

We consider the planar three body problem under an inverse cube force -- which has some exceptional properties (see e.g. \cite{AlbHom}). For this strong force, the Lagrange-Jacobi identity (eq. \ref{LJ}) shows that periodic orbits are only possible at the zero energy level, which is the motivation in \cite{MPants, shirts} for studying orbits with zero energy. Although collision orbits occur also on the non-zero energy levels, our focus here is to complete the description of orbits on this zero energy level. The Jacobi-Maupertuis principle allows one to reparametrize orbits of a natural Hamiltonian system on a fixed energy level as geodesics of a certain metric -- the JM-metric -- defined on the configuration space $Q$. The symmetry group $G$ of the inverse cube problem consists of translations, rotations \textit{and scalings} of the triangle formed by the three bodies and are now isometries of the zero energy JM-metric. We may, by Riemannian submersion, define a reduced metric on the quotient $Q/G=:\Sigma$. Due to the additional scaling symmetry this quotient space is two dimensonal, topologically it is a sphere minus 3 points, or a pair of pants  (see figure \ref{fig:pants}). Geodesics of the reduced JM-metric on $\Sigma$ represent zero energy orbits up to symmetries of the inverse cube 3-body problem which move perpendicularly to the $G$-orbit at each instant.

The advantages of this process for the inverse cube 3-body problem are illustrated in Montgomery's article \cite{MPants}. Montgomery computed that, when the three masses are equal, the Gaussian curvature of the JM-metric on this pair of pants is negative away from a discrete set. This allows one to describe all such periodic orbits by the free homotopy class they realize on $\Sigma$ -- the negative curvature allowing one to assert that the correspondence is one to one: up to symmetries, there is \textit{at most} one periodic orbit in each free homotopy class of $\Sigma$. On pg. 6 of \cite{MPants}, Montgomery leaves some open questions or 'loose ends', asking whether one can likewise code the orbits beginning and ending in collisions -- in particular the action or JM-length of such orbits is infinite. In this article, we will tie up these loose ends by describing the geodesics on $\Sigma$ which begin or end in binary collisions (theorem \ref{thm:syz} below). We describe these orbits using 'syzygy sequences':

\begin{SCfigure}[2][b]
    \begin{tikzpicture}[scale = .7]

  \draw[color=black] (-1,2) to [bend right=20] (2,.3);
  
  \node[below] at (.5,1.5) {3};

  \draw[color=black] (2,-.3) to [bend right=20] (-1,-2);
  
  \node at (.5,-1.1) {2};
  
  \draw[color=black] (-1.5,-1.9) to [bend right=20] (-1.5,1.9);
  
  \node at (-1.5,0) {1};
  
  \draw [thick, color = red] (-1.1,.1) to[out=70,in= -185] (-.5,.5) to[out= -5,in=-185] (2,.1);
  
  \draw [dashed, thick, color = red] (-1.1,.1) to[out=-105,in=180] (-.4,-1) to[out=0,in=-135] (.245,-.85);
  
  \draw [thick, color = red] (.245,-.85) to[out=45,in=-40] (.2,.3) to[out=140,in=-50] (-1.3,2);

\end{tikzpicture}
\caption{The pair of pants $\Sigma$ and a collision orbit (red) realizing the syzygy sequence $12$. The 3 collinear arcs (black) are labelled $1,2,3$ and divide $\Sigma$ into an upper and lower region -- these two regions are related by reflecting the planar configuration, which is a symmetry of $\Sigma$.}
\label{fig:pants}
    \end{SCfigure}
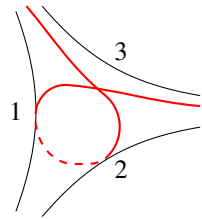

\begin{definition}
Label the 3-bodies by 1,2,3 and the collinear arcs on $\Sigma$ by which body is in the middle. A \textit{syzygy sequence} is a map, $s$, from $I\subset\mathbb{Z}$ to $\{1,2,3\}$, i.e. a list of the symbols $1,2,3$. We call a syzygy sequence \textit{finite} when $I=\{1,2,...,N\}$,  \textit{semi-infinite} when  $I=\mathbb{N}$ and \textit{bi-infinite } when $I=\mathbb{Z}$, such sequences are said to be \textit{stutter free} if $s(i)\ne s(i+1)$.
\end{definition}

To any curve on $\Sigma$, we may assign a syzygy sequence by listing in temporal order the collinear arcs crossed by the curve. One may 'homotope away' any tangencies to the collinear arcs or stutters in a given syzygy sequence. For example a curve with syzygy sequence $1233$ is homotopic to a curve with syzygy sequence $12$ and for the collinear arcs themselves, one may assign either the bi-infinite $...aaa...$ for $a\in\{1,2,3\}$ or, by cancelling stutters, the empty sequence. We always assign a closed curve its bi-infinite (repeating) syzygy sequence, which can be represented with an overbar, for example $\overline{12}$ represents a loop around one of the ends.

\begin{definition}
By a \textit{collision orbit} of the planar 3-body problem, we mean a solution $(q_1(t),q_2(t),q_3(t))\in\mathbb{C}^3$ s.t. $|q_i(t)-q_j(t)|\to 0$ as $t\to t_c$ for some $i\ne j$ and $t_c\in\mathbb{R}$. We call a collision orbit of the planar 3-body problem a \textit{straight collision orbit} if its projection to $\Sigma$ has finite syzygy sequence, and a \textit{winding collision orbit} if its projection to $\Sigma$ begins and ends with a sequence of two alternating symbols, e.g. $...121212,31,323232...$.
\end{definition}

\begin{theorem}\label{thm:syz}
Consider the planar inverse cube three body problem with equal masses. Up to symmetries, orbits with zero angular momentum and constant moment of inertia are reparametrized as geodesics on the surface $\Sigma$. Then:

(i) any finite stutter free syzygy sequence is realized by two geodesics (straight collision orbits).

(ii) the stutter free syzygy sequences of the form $...abababs_1...s_kcdcdcd...$ with $s_1\ne a$, $s_k\ne d$ are all realized by multiple geodesics (1-parameter families of winding collision orbits).

\end{theorem}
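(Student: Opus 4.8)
\medskip
\noindent\textit{Proof strategy.} The plan is to pass to the universal cover $\tilde\Sigma$ and reduce both statements to a symbolic-dynamics argument in the spirit of cutting sequences for the Farey tessellation, the novelty being the treatment of the infinite ``fans'' of walls at the binary-collision ends, which is exactly where I expect to use the visibility properties of $\Sigma$ developed above. First I would record that $\tilde\Sigma$ is a Hadamard surface carrying a visibility structure: $\partial_\infty\tilde\Sigma$ is a circle, any two distinct ideal points are joined by a geodesic, and by a unique one since the curvature is negative off a discrete set (so there are no geodesic bigons). Because the reflection symmetry fixes the three collinear arcs pointwise, these arcs are themselves geodesics; they are disjoint and divide $\Sigma$ into its two hemispheres, each an ideal triangle with the three collision ends as ideal vertices. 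Pulling back, the preimages of the arcs form a locally finite, $\pi_1(\Sigma)$-invariant family of pairwise disjoint complete geodesics --- the \emph{walls}, each labelled $1$, $2$ or $3$ --- tiling $\tilde\Sigma$ by lifts of the two ideal triangles: across a label-$k$ wall lie an ``up'' and a ``down'' triangle whose other two edges carry the two labels $\ne k$. Hence the sequence of walls met by a geodesic transverse to the arcs --- its syzygy sequence --- is automatically stutter free, and conversely a stutter-free word, together with the declaration that the first triangle be an up- or a down-lift, determines a chain of triangles $\dots,T_{i-1},T_i,\dots$ with $W_i:=T_{i-1}\cap T_i$ a wall of the prescribed label, in which consecutive walls are nested ($W_i$ separates $W_{i-1}$ from $W_{i+1}$, since each $T_i$ lies on one side of each of its edges).

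Next I would prove the coding lemma that turns chains into geodesics. Nesting of $W_1,W_2,\dots$ makes their ideal endpoints nest on $\partial_\infty\tilde\Sigma$, so a forward chain cuts out a decreasing sequence of ``pockets''; the lemma asserts these shrink, with a dichotomy. If the forward chain eventually runs straight towards a single ideal vertex --- a lift $v$ of one collision end --- the pockets collapse to $v$, and a geodesic landing at $v$ leaves that end directly with a finite tail (it is asymptotic to, but crosses none of, the fan of walls emanating from $v$). If instead the forward chain is eventually the alternating word $\overline{cd}$, it runs through the infinite fan of walls around the $\{cd\}$-end; then the pockets shrink into the closed arc $J\subset\partial_\infty\tilde\Sigma$ subtended by one lift of that end, a geodesic ending in $J$ spirals into the collision crossing $c$ and $d$ alternately forever, and --- here is the new phenomenon --- because a loop around $\{cd\}$ is not freely homotopic to a closed geodesic in $\Sigma$ (its would-be representative running off into the collision at infinite length, by completeness), $J$ has nondegenerate interior, so an alternating tail determines the endpoint only up to a sub-arc of $J$, not to a single point. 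Establishing this dichotomy --- that the pockets really shrink, and that an endpoint in $J$ produces a genuinely infinitely-winding geodesic rather than one that stalls or escapes the end prematurely --- is the technical heart; it needs quantitative estimates at the ends, where the curvature degenerates to $0$, and this is precisely what the visibility analysis supplies.

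Finally I would assemble the two cases. For a finite stutter-free word $s_1\cdots s_N$ (part (i)): the two choices of initial hemisphere give two chains $T_0,\dots,T_N$; by the combinatorics the backward ray in $T_0$ and the forward ray in $T_N$ must leave through the ideal vertices of $T_0$ and $T_N$ opposite $W_1$ and $W_N$, so in each case the geodesic is forced to run between two specified ideal vertices; existence and uniqueness come from the visibility structure, its syzygy sequence is exactly $s_1\cdots s_N$ by the ``straight tail'' half of the lemma applied at both ends, and the two chains project to two straight collision orbits interchanged by the reflection symmetry (which preserves syzygy sequences). For a word $\dots\,abab\,s_1\cdots s_k\,cdcd\dots$ with $s_1\ne a$, $s_k\ne d$ (part (ii)): fixing the initial hemisphere and building the bi-infinite chain, the ``alternating tail'' half of the lemma sends the backward endpoint into the arc subtended by the $\{ab\}$-end and the forward endpoint into the arc subtended by the $\{cd\}$-end; the hypotheses $s_1\ne a$, $s_k\ne d$ are exactly what makes the chain genuinely turn at the two junctions, so those two arcs are disjoint (the endpoints are distinct) while the middle block $s_1\cdots s_k$ is stable under small motions of the endpoints, wall crossings being transverse. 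Letting the endpoints vary in those arcs, the visibility structure furnishes a one-parameter family of complete geodesics with exactly the prescribed syzygy sequence, which project to the claimed family of winding collision orbits. The one recurring routine point throughout is checking that a built chain has distinct forward and backward endpoints, so that ``the geodesic between them'' is nondegenerate.
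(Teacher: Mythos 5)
Your part (i) is essentially the paper's argument in cutting-sequence clothing: a stutter-free word determines a chain of fundamental domains (your ideal triangles), the two hemisphere choices give the two orbits of Remark \ref{rmk:symms}, existence of the connecting geodesic comes from visibility, and uniqueness from the no-flat-strip/Toponogov argument. The gap is in the visibility input you assume. You assert outright that $\tilde\Sigma$ is a visibility manifold --- ``any two distinct ideal points are joined by a geodesic.'' That is exactly what is not available here: the ends of $\Sigma$ asymptote to \emph{flat} cylinders, the curvature degenerates to $0$ there, and the Eberlein--O'Neill visibility axiom is precisely the kind of statement that breaks on asymptotically flat pieces (in a flat half-plane two ideal directions are joined by a geodesic only if they are antipodal). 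The whole content of Lemma \ref{Lem:vis} is a \emph{restricted} visibility statement --- visibility with respect to the seams only, i.e. for ideal points that are endpoints of lifted collinear arcs --- proved via Busemann functions and the finite-diameter-ends hypothesis. For part (i) that restricted statement suffices, since the two ideal points you need to join are the ``straight down the leg'' vertices of the first and last triangles, which are seam endpoints.

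For part (ii) the gap bites harder. Your construction needs geodesics joining a point in the interior of the forward arc $J_{cd}$ to a point in the interior of the backward arc $J_{ab}$; those interior points are not endpoints of any lifted seam, so Lemma \ref{Lem:vis} does not furnish them, and you have no replacement --- you explicitly defer the ``technical heart'' (that the pockets shrink, that $J$ is nondegenerate, that an endpoint in the interior of $J$ forces infinite winding rather than escape or stalling) to ``quantitative estimates at the ends,'' which is to concede the step rather than prove it. The paper avoids this boundary-at-infinity bookkeeping entirely: it takes the straight collision geodesic $\gamma$ already built in part (i), perturbs its initial velocity slightly, and argues that the perturbed geodesic still falls down the same two legs (because the ends asymptote to cylinders) while crossing the same middle block $s_1\cdots s_k$ (continuous dependence on initial conditions); since the perturbed geodesic meets $\gamma$ in a point but is distinct from it, it cannot be forward or backward asymptotic to $\gamma$, hence must pick up the alternating winding tails. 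If you want to salvage your route, you would need to prove the nondegeneracy of $J$ and the existence of geodesics between distinct $J$-arcs as a separate lemma --- which is genuinely more than Lemma \ref{Lem:vis} gives and more than your sketch provides.
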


\begin{remark}\label{rmk:symms}
The two straight collision orbits realizing a given syzygy sequence are related by the symmetry of $\Sigma$ induced by a reflection in the plane.
\end{remark}

\begin{remark}\label{rmk:shirts}
In \cite{shirts}, we show that the dynamics of parallelogram configurations in the equal masses 4-body problem under an inverse cube force can also be reduced to a non-positively curved geodesic flow on a 'shirt' or sphere with 4 punctures. The proof of theorem \ref{thm:syz} goes through without significant differences to describe the collision orbits in this parallelogram problem as well.
\end{remark}

\begin{remark}[Further loose ends]\label{rmk:nonzero}
When considering zero angular momentum periodic orbits of the equal masses inverse cube problem, there is no loss of generality in taking the constraints imposed by the hypotheses of theorem \ref{thm:syz}: every periodic orbit has constant moment of inertia and zero energy. For collision orbits, these constraints are not so natural. In particular it would be interesting to see if the methods here can be applied to describe collision or escape orbits with non-zero energy.
\end{remark}

The proof of theorem \ref{thm:syz} boils down to verifying some 'visibility properties' on $\Sigma's$ universal cover, $H$: given lifts $\tilde\gamma_1,\tilde\gamma_1\in H$ of geodesics on $\Sigma$, when does there exist a geodesic forward asymptotic to $\tilde\gamma_1$ and backwards asymptotic to $\tilde\gamma_2$? The result on collision orbits amounts to the statement that $\Sigma$ is 'visible with respect to the collinear arcs'. We show this using Busemann functions. After this, the uniqueness follows from Toponogov's theorem, and the description of winding orbits from perturbing the straight collision orbits.

In section \ref{sec:not} we set up the problem -- defining the reduced JM-metric on the pair of pants $\Sigma$, and in section \ref{sec:vis} we recall the relevant notions of visibility manifolds (see \cite{EOVis}) used to prove theorem \ref{thm:syz} in section \ref{sec:prf}. In fact we prove a slightly more general visibility property of certain non-positively curved metrics on spheres with $k\ge 3$ punctures (lemma \ref{Lem:vis}).



\section{The reduced JM-metric on $\Sigma$}
\label{sec:not}

Identifying the plane with the complex numbers, the configuration space for 3 point masses in the plane is $$Q:=\mathbb{C}^3\backslash \Delta,$$ where $\Delta :=\{ (q_1,q_2,q_3)\in Q : q_j = q_k$ for some $j\ne k\}$ consists of the \textit{collisions}.

The potential for three \textit{unit} masses under an inverse cube force is\\ $U := \sum_{j<k}|q_j-q_k|^{-2}$, and we may write the equations of motion as: \begin{equation}\label{eq:motion}
    \ddot q_j = \frac{\partial U}{\partial q_j}.
\end{equation} 
Also, one has that the \textit{energy}, $E:= \frac{\sum_{j=1}^3 |\dot q_j|^2}{2} - U(q)$, is constant over solutions of eq. \ref{eq:motion}.


The Jacobi-Maupertuis principle (see \cite{Arn} \S 45D), states that the solutions of eq. \ref{eq:motion} at a fixed energy level $E^{-1}(e)$ can be reparametrized as geodesics of the JM-metric:\begin{equation}\label{JM}
    ds_{JM}^2 := (e+U)ds^2,
\end{equation} where $ds^2 := \sum_{j=1}^3dq_jd\overline q_j$ is the standard Euclidean metric on $\mathbb{C}^3$. The JM-metric is defined on the Hill region: $\{ q: e+U(q)>0\}\subset Q$. 

The symmetry on solutions of eq. \ref{eq:motion} under translations and boosts, allows to carry out the translation reduction by the choice of in inertial frame with center of mass zero. That is, we restrict to solutions lying in $$Q_0:=\{ (q_1,q_2,q_3)\in Q : \sum q_j = 0\}\cong \mathbb{C}^2\backslash \Delta_0,$$ where $\Delta_0$ consists of 3 complex lines through the origin of $\mathbb{C}^2$. On $Q_0$, the moment of inertia is given by $I(q):=\sum_{j=1}^3 q_j\overline{q}_j$. Over a solution $q(t)\in Q_0$ with energy $e$, due to $U$'s homogeneity of degree $-2$, we have the Lagrange-Jacobi identity \begin{equation}\label{LJ}
    \ddot I = 4e
\end{equation} In particular, periodic orbits are only possible for zero energy which is the motivation in \cite{MPants, shirts} for fixing attention to the zero energy level.

The zero energy JM-metric, $Uds^2$ on $Q_0$ is invariant under complex scaling. The quotient map $\pi: Q_0\to Q_0 / \mathbb{C}^*, q_0\mapsto [q_0]$ is, under a linear identification of $Q_0$ with $\mathbb{C}^2\backslash\Delta_0$ ,the usual Hopf map so that $$Q_0/\mathbb{C}^*\cong S^2\backslash\{3pts\}.$$ Now, since scaling is a symmetry of the zero energy JM-metric we may define a metric, $d\overline{s}_{JM}^2$, on the quotient by $$d\overline{s}_{JM[q_0]}^2(\pi_*u, \pi_*v):= ds_{JMq_0}^2(u,v).$$ The geodesics of $\Sigma:=Q_0/\mathbb{C}^*$ under the metric $d\overline{s}_{JM}^2$, represent zero energy solutions $q(t)$ of eq. \ref{eq:motion} up to symmetries moving perpendicular to the fibers: $$0=ds^2(\dot q, iq) = C,~~ 0 = ds^2(\dot q, q) = \dot I$$ where $C$ is the angular momentum of the solution, and by eq. \ref{LJ} the condition $\dot I = 0$ along with $E= 0$ are equivalent to the condition that moment of inertia be constant over the solution.

\section{Visibility manifolds}
\label{sec:vis}

We recall some notions of hyperbolic geometry (see e.g. \cite{EOVis, B}) that allow us to prove Lemma \ref{Lem:vis} -- which will be the main tool used to construct collision orbits on the pair of pants associated to the reduced strong force 3-body problem.

Let $M$ be a complete non-positively curved surface, then $M$ has no conjugate points and the exponential map at a point is a covering map -- the universal cover, $H$, of $M$ is topologically  $\mathbb{R}^2$ and we may pull back the metric on $M$ to equip $H$ with a complete non-positively curved metric ($H$ is called a \textit{Hadamard manifold}). We always consider \textit{unit speed} geodesics on $H$. Two geodesics $\alpha,\beta$ of $H$ are \textit{forward asymptotic} (resp. \textit{backwards asymptotic}) if $d(\alpha(t), \beta(t)) = O(1)$ as $t\to \infty$ (resp. $t\to-\infty$), where $d$ is the distance function induced by the metric on $H$. Forward asymptotic is an equivalence relation on geodesics of $H$ and we write $H(\infty)$ for the set of equivalence classes, and $\alpha(\infty)$, (resp. $\alpha(-\infty)$), for the class of geodesics forward asymptotic to $\alpha(t)$, (resp. $\alpha(-t)$). For two points $x\ne y\in H(\infty)$ we would like to determine when there exists a geodesic $\alpha$ of $H$ from $x$ to $y$, i.e. with $\alpha(\infty)=x$ and $\alpha(-\infty) = y$.

\begin{definition}
A non-positively curved manifold $M$ is \textit{visible with respect to} the geodesics $\gamma_1, \gamma_2$ of $M$ if for any lifts, $\tilde\gamma_i$, of $\gamma_i$ to $H$, and choice of distinct points $x,y\in \{ \tilde\gamma_i(\pm\infty)\}$, there exists a geodesic of $H$ from $x$ to $y$.
\end{definition}

We now recall some useful properties of Busemann functions. A \textit{Busemann function} for $x = \alpha(\infty)\in H(\infty)$ is $f_x(h):= \lim_{t\to\infty} (d(h,\alpha(t)) - t)$, this function $f_x:H\to\mathbb{R}$ being well defined up to shifts by a constant. Hence the foliation of $H$ into level sets $f_x^{-1}(c)$, called \textit{horocycles of } $x$, does not depend on the representative chosen for $x$. It can be shown (see \cite{EOVis} pg. 58) that the function $f_x$ is smooth and that its gradient $\nabla f_x(h)$ gives the initial velocity of a geodesic forward asymptotic to $x$. In particular it follows that:
\begin{description}[Property]
\item[Property 1]{For $x\ne y\in H(\infty)$, if there exist disjoint horocycles of $x$ and $y$ ($f_x^{-1}(c_1)\cap f_y^{-1}(c_2) =\emptyset$ for some $c_i\in\mathbb{R}$), then there exists a geodesic from $x$ to $y$.}
\end{description}
Which can be seen by fixing $c_1$ and considering the first value $c\in\mathbb{R}$ for which $f_y^{-1}(c)\cap f_x^{-1}(c_1)\ne\emptyset$. At a point $h$ in this intersection, the two horocycles are tangent and a geodesic with initial velocity $\nabla f_y(h)$ will connect $x$ to $y$. We will also make use of (see \cite{EOVis} pg. 57):

\begin{description}[Property]
\item[Property 2]{Horocycles of $x$ have: $d(f_x^{-1}(c_1), f_x^{-1}(c_2)) = |c_1 - c_2|$.}
\end{description}

Now let $P_k$ be homeomorphic to  $S^2\backslash \{p_1,...,p_k\}$ -- a sphere with $k\ge 3$ punctures and equipped with a complete metric of non-positive curvature. We say $P_k$ has \textit{finite diameter ends} if for each $p_j$ we have $\sup_U \{\inf length(\gamma)\}<\infty$ where $U$ is a neighborhood of $p_j$ and $\gamma$ a loop in $U$ realizing the free homotopy class of a loop around $p_j$. We can show:

\begin{lemma}\label{Lem:vis}
Suppose $P_k\cong S^2\backslash\{ p_1,...,p_k\}$ is equipped with a complete non-positively curved metric having finite diameter ends and for which there exist $k$ disjoint geodesics ('seams') $\gamma_j$ from $p_j$ to $p_{j+1}$, for $j=1,...,k$ (and $p_{k+1}:=p_1$). Then $P_k$ is visible with respect to $\gamma_j$.
\end{lemma}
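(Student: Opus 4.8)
\medskip\noindent\emph{Proof strategy.} The plan is to reduce the whole statement to Property~1 by exhibiting, for each relevant pair of ideal points $x\neq y$, a pair of disjoint horocycles. First I would record the covering picture near an end. Lift all the seams $\gamma_j$ to $H$; since the $\gamma_j$ are embedded and pairwise disjoint and $H$ is simply connected, their lifts are pairwise disjoint complete geodesics, each with two well-defined endpoints in $H(\infty)$, and $H$ minus these lifts is a tessellation by ideal $k$-gons. For a puncture $p_j$ fix a punctured-disc neighbourhood $U$ of $p_j$ in $P_k$; since the loop about $p_j$ has infinite order, $\pi_1 U\hookrightarrow\pi_1 P_k$ is injective, so each component $V$ of $\pi^{-1}(U)\subset H$ is the universal cover of the annulus $U$, an embedded strip stabilised by the infinite cyclic group $\langle g_j\rangle$ generated by the corresponding deck transformation, with $V/\langle g_j\rangle=U$.

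The heart of the matter, and the step I expect to be the main obstacle, is to extract from the finite-diameter-ends hypothesis that these ends are genuine cusps. The translation length $\inf_{z\in H}d(z,g_jz)$ must vanish: a positive value would be a uniform lower bound for the length of loops about $p_j$, contradicting the existence of arbitrarily short such loops arbitrarily deep in the end (which is what "finite diameter'' buys once one notes that a length-minimising sequence of loops in a peripheral free-homotopy class on $P_k$ must, if it does not converge to a closed geodesic, escape into the corresponding end). Being a nontrivial deck transformation, $g_j$ is fixed-point-free, hence parabolic. I would then check that for any lift $\tilde\gamma$ of a seam with $\tilde\gamma(\infty)$ lying over $p_j$ the point $\xi:=\tilde\gamma(\infty)$ is \emph{the} fixed point at infinity of $g_j$ (the rays $\tilde\gamma$ and $g_j\tilde\gamma$ are forward asymptotic because the displacement function $d_{g_j}(z):=d(z,g_jz)$ tends to $0$ along any ray into the cusp), that $f_\xi$ is $\langle g_j\rangle$-invariant, and — the key point — that the horoballs $f_\xi^{-1}((-\infty,c])$ sink, as $c\to-\infty$, into arbitrarily small neighbourhoods of $p_j$; equivalently, into the component of $\pi^{-1}(U')$ containing the cusp direction toward $\xi$, for every punctured-disc neighbourhood $U'$ of $p_j$. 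I would prove this from the convexity of $f_\xi$ and of $d_{g_j}$ together with the fact that the $\varepsilon$-thin parts around $p_j$ form a shrinking family of punctured-disc neighbourhoods.

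Granting this, Property~1 is reached by a short case analysis on a pair of distinct ideal points $x,y$ among the $\tilde\gamma_i(\pm\infty)$. If $x$ and $y$ are the two endpoints of a single lifted seam, that seam \emph{is} a geodesic from $x$ to $y$. Otherwise let $x,y$ lie over punctures $p,p'$. If $p\neq p'$, choose disjoint punctured-disc neighbourhoods $U\ni p$, $U'\ni p'$; then $\pi^{-1}(U)\cap\pi^{-1}(U')=\emptyset$, so for $c$ negative enough the horocycles $f_x^{-1}(c)\subset\pi^{-1}(U)$ and $f_y^{-1}(c)\subset\pi^{-1}(U')$ are disjoint. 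If $p=p'$, choose one small punctured-disc neighbourhood $U$ of $p$ and let $V_x,V_y$ be the components of $\pi^{-1}(U)$ carrying the cusp directions toward $x$ and $y$; these must be distinct, for if $V_x=V_y$ then $x$ and $y$ would be endpoints at infinity of two geodesic rays running out the same cusp end of a strip whose width tends to $0$, forcing them to be forward asymptotic and hence equal. In that case too, for $c$ negative enough $f_x^{-1}(c)\subset V_x$ and $f_y^{-1}(c)\subset V_y$ are disjoint. In every case we obtain disjoint horocycles of $x$ and $y$; Property~1 then yields a geodesic of $H$ from $x$ to $y$, which is exactly the visibility of $P_k$ with respect to the seams. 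The only substantial work is the cusp statement of the middle paragraph — controlling the Busemann functions at the seam endpoints via the finite-diameter hypothesis — everything else being bookkeeping with Properties~1 and~2.
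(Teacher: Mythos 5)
Your overall architecture is reasonable and close in spirit to the paper's (reduce everything to Property~1 by localizing horocycles near the ends), but the step you yourself flag as ``the heart of the matter'' contains a genuine error and is otherwise only asserted. The claim that the peripheral deck transformation $g_j$ is parabolic does not follow from the finite-diameter hypothesis: that hypothesis gives uniformly \emph{bounded} peripheral loops arbitrarily deep in the end, not arbitrarily \emph{short} ones. A pair of pants with flat half-cylinder ends (hyperbolic core glued to $S^1\times[0,\infty)$) is complete, non-positively curved, has finite diameter ends and admits seams, yet every $g_j$ has positive translation length and a closed geodesic representative. This is not a pathological counterexample to your reasoning --- it is essentially the case of interest, since the reduced JM-metric asymptotes to finite-diameter \emph{cylinders} (not cusps) near the collisions. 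Consequently the picture of ``a strip whose width tends to $0$'' and the appeal to shrinking $\varepsilon$-thin parts (which in any case requires curvature bounds you have not assumed) cannot be used. The downstream facts you want --- that all seam-lifts running out the same end share an endpoint $\xi$, that $f_\xi$ is $\langle g_j\rangle$-invariant, and above all that the horoballs $f_\xi^{-1}((-\infty,c])$ eventually lie in the preimage of any neighbourhood of $p_j$ --- happen to remain true, but your proposed derivation of them collapses, and the last one is exactly the nontrivial content of the lemma, so leaving it at the level of ``I would prove this from convexity'' is a real gap rather than bookkeeping.

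For comparison, the paper gets the needed localization without any discussion of parabolicity or thin parts: fixing a fundamental domain $\tilde D$, the finite-diameter hypothesis forces the two seam-lifts adjacent to an end to be asymptotic, so the Busemann function $f_x$ restricts to each as $-s+\mathrm{cst}$; hence each horocycle $f_x^{-1}(-s)$ crosses each adjacent seam-lift exactly once, splitting it into two rays trapped in the adjacent tiles $\tilde D_1,\tilde D_2$ and a compact middle arc, and Property~2 (horocycles at parameter distance $\delta$ are at metric distance $\delta$) pushes that middle arc into $\tilde D$ for $s$ large. That elementary argument is what you need in place of your middle paragraph; with it, your final case analysis (distinct punctures, or distinct components over the same puncture, or the two ends of a single seam-lift) goes through and is in fact a cleaner way to organize the endgame than the paper's enumeration of fundamental-domain addresses.
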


\begin{proof}
\smartqed
Opening $P_k$ along the seams $\gamma_1,...,\gamma_{k-1}$ we have a simply connected region $D$, whose lifts (fundamental domains) tile $H$. Consider a lift $\tilde D\subset H$ of $D$, then $H\backslash \tilde D$ consists of $2(k-1)$ connected components (see figure \ref{fig:fd} for labeling). The key observation is that for $x \in\{\gamma_j^{\tilde D}(\pm\infty)\}$ there are horocycles contained in $\tilde D$ and the components of $H\backslash\tilde D$ 'adjacent to $x$'. For example there are horocycles of $\gamma_1^{\tilde D}(\infty)$ contained in $\tilde D\cup \tilde D_1\cup \tilde D_2$.

\begin{figure}[!htb]
    \centering
   
         \begin{tikzpicture}[scale = .5]

         \path [fill=lightgray] (-10,-.6) to[out=0,in= 92] (-6.3,-3.2) -- (-5.7,-3.2) to[out=88,in= -180] (-4,-.6) to[out= 0,in=92] (-2.3,-3.2) -- (2.3,-3.2) to[out=88,in= -180] (4,-.6) to[out= 0,in=92] (5.7,-3.2) -- (6.3,-3.2) to [out =88, in = -180] (10,-.6) -- (10,.6) to [out = -180, in = -88] (6.3,3.2) -- (5.7,3.2) to[out=-92,in= 0] (4,.6) to[out= 180,in=-88] (2.3,3.2) -- (-2.3,3.2) to[out=-92,in= 0] (-4,.6) to[out= 180,in=-88] (-5.7,3.2) -- (-6.3,3.2) to[out=-92,in= 0] (-10,.6) -- (-10,-.6);
         
         \draw[color = black, <-] (-10,0) -- (-1,0);
         
         \draw[color = black] (1,0) -- (10,0);
         
         \node at (0,0) {\textbf{.~.~.}};
         
         \node[above] at (0,1) {$\tilde D$};
         
         \node[above] at (1.5,0) {$\gamma_k^{\tilde D}$};
         
         
         \draw [color = black, ->] (-10,-.6) to[out=0,in= 92] (-6.3,-3.2);
         
         \node at (-7,-.9) {$\gamma_1^{\tilde D}$};
         
         \node at (-9,-2.8) {$\tilde D_1$};

         \draw [color = black, ->] (-5.7,-3.2) to[out=88,in= -180] (-4,-.6) to[out= 0,in=92] (-2.3,-3.2);
         
         \node at (-2.5,-.9) {$\gamma_2^{\tilde D}$};
         
         \node at (-4,-2.8) {$\tilde D_2$};

         \draw [dashed, color = black, ->] (-10,.6) to[out=0,in= -92] (-6.3,3.2);
         
         \node at (-7,.9) {$\gamma_{-1}^{\tilde D}$};
         
         \node at (-9,2.8) {$\tilde D_{-1}$};

         \draw [dashed, color = black,->] (-5.7,3.2) to[out=-88,in= -180] (-4,.6) to[out= 0,in=-92] (-2.3,3.2);
         
         \node at (-2.5,.9) {$\gamma_{-2}^{\tilde D}$};
         
         \node at (-4,2.8) {$\tilde D_{-2}$};

         \draw [color = black,->] (2.3,-3.2) to[out=88,in= -180] (4,-.6) to[out= 0,in=92] (5.7,-3.2);
         
         \node at (5.8,-.9) {$\gamma_{k-2}^{\tilde D}$};
         
         \node at (4,-2.8) {$\tilde D_{k-2}$};

         \draw[color=black,->] (6.3,-3.2) to [out =88, in = -180] (10,-.6);
         
         \node at (9.5,-1) {$\gamma_{k-1}^{\tilde D}$};
         
         \node at (9,-2.8) {$\tilde D_{k-1}$};

         \draw [dashed, color = black,->] (2.3,3.2) to[out=-88,in= -180] (4,.6) to[out= 0,in=-92] (5.7,3.2);
         
         \node at (5.8,.9) {$\gamma_{2-k}^{\tilde D}$};
         
         \node at (4,2.8) {$\tilde D_{2-k}$};

         \draw[dashed, color=black,->] (6.3,3.2) to [out = -88, in = -180] (10,.6);
         
         \node at (9.5,1) {$\gamma_{1-k}^{\tilde D}$};
         
         \node at (9,2.8) {$\tilde D_{1-k}$};

         \draw[thick, color=red] (-8,-3.2) to [out = 30, in = 120] (-5,-3.2);
         

\end{tikzpicture}
        \caption{A fundamental domain $\tilde D$ with labeled edges and components of $\tilde D^c$ -- we will use this same labeling convention for a general fundamental domain. In red is a horocycle of $\gamma_1^{\tilde D}(\infty) = \gamma_2^{\tilde D}(-\infty)$.}
        \label{fig:fd}
\end{figure}
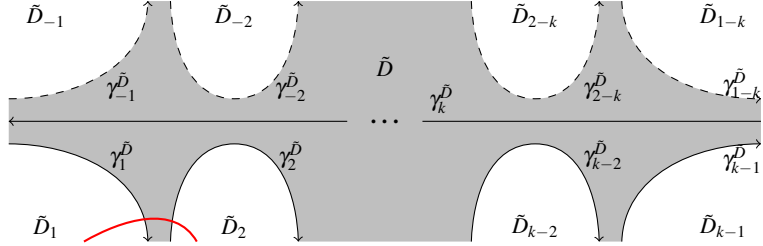


Indeed, let $x=\gamma_1^{\tilde D}(\infty)$. Since the ends are finite diameter, we have $x=\gamma_2^{\tilde D}(-\infty)$, and may choose a Busemann function $f_x$ s.t. $f_x(\gamma_1^{\tilde D}(s)) = -s$ and $f_x(\gamma_2^{\tilde D}(-s)) = -s + cst.$ for $s\in\mathbb{R}$.  Hence the horocycle $f_x^{-1}(-s)$ crosses each of $\gamma_{1}^{\tilde D}, \gamma_{2}^{\tilde D}$ in exactly one point, in particular it consists of two rays $r_1, r_2$ contained in $\tilde D_1, \tilde D_2$ respectively and a smooth arc $h_s$ connecting $\gamma_1^{\tilde D}(s)$ to $\gamma_2^{\tilde D}(-s + cst.)$ and contained in $(\tilde D_1\cup \tilde D_2)^c$. For given $s>0$, the arc $h_s$ may not be contained entirely in $\tilde D$: it is possible $h_s$ wanders into some $\tilde D_j$ ($j\ne 1,2$) for some time before returning to $\tilde D$ (in order to terminate at $\gamma_2^{\tilde D}(-s + cst.)$). However, by property 2, $d(h_s, h_{s+\delta}) = \delta$ and so by taking $\delta$ sufficiently large, we may seperate $h_{s+\delta}$ from any of these excursions of $h_s$ into $\tilde D_j$ -- in particular $h_{s+\delta}\subset \tilde D$ for $\delta$ sufficiently large.

The main idea of the proof now is that the points of $H(\infty)$ coming from lifts of seams that we wish to connect are either already connected by a seam or their horocycles have a seperating strip between all but a compact arc, property 2 allowing us to seperate these horocycles and apply property 1. To understand the notation, the reader may wish to consider when $k=3$ in what follows.

Without loss of generality, we will show that $P_k$ is visible wrt $\gamma_1, \gamma_j$ for $j=1,...,k$ and consider a fixed lift, $\tilde\gamma_1=\gamma_1^{\tilde D}$ of $\gamma_1$ lying in fundamental domain $\tilde D$.

case1: $\tilde\gamma_j\in cl(\tilde D)$. Distinct points of $x,y\in \{ \tilde\gamma_1(\pm\infty), \tilde\gamma_j(\pm\infty)\}$ which are not already connected by a seam eventually have horocycles lying in regions which, apart from possible overlap in $\tilde D$, are disjoint. By property 2, fixing such a horocycle through $x$ say and taking $c<<0$, we ensure that $f_y^{-1}(c)$ becomes disjoint from this horocycle through $x$. Hence, by property 1, there exists a geodesic from $x$ to $y$. 

case2: $\tilde\gamma_j\notin cl(\tilde D)$. Then $\tilde\gamma_j$ lies in a fundamental domain $\tilde E\ne \tilde D$. To $\tilde E$ is associated a unique finite list of the symbols $\{ \pm 1,...,\pm (k-1)\}$: we choose $a_1$ s.t. $\tilde D_{a_1}\supset \tilde E$ and let $\tilde E^1\subset \tilde D_{a_1}$ be the fundamental domain bordering $\tilde D$ along $\gamma_{a_1}^{\tilde D}$, then choose $a_2$ s.t. $\tilde E_{a_2}^1\supset \tilde E$, and so on until $\tilde E^n=\tilde E$. If this sequence begins with $a_1$ then for any seam lifted to $\tilde E$ its horocycles are either equivalent to those of $\gamma_{a_1}^{\tilde D}(\pm\infty)$ (treated in case 1) or eventually are entirely contained in $\tilde D_{a_1}$. Hence for $a_1\notin\{\pm 1,2 \}$ we have disjoint horocycles and connecting geodesics.

It remains to consider when the sequence of $\tilde E$ begins with some number $n$ of the symbols $\{\pm 1, 2\}$ before terminating or taking another symbol. If $n=1$ we may argue as when the sequence of $\tilde E$ begins with $a_1\notin \{\pm 1, 2\}$, so consider $n\ge 2$, say we begin with $1,1,...$ (the other possibilities can be handled in the same way). For a seam lifted to $\tilde E$, it has either some horocycles contained in $\tilde E_1^1$, or equivalent to those of $\gamma_1^{\tilde E^1}(\pm\infty)$ -- in particular $\gamma_j^{\tilde E}(\pm\infty)$ can be connected to $\tilde\gamma_1(\infty) = \gamma_{-1}^{\tilde E^1}(\infty)$. Now we show how $\tilde\gamma_1(-\infty)$ may be connected to $\gamma_j^{\tilde E}(\pm\infty)$. The sequence of $\tilde E$ may be a finite list of 1's, or has a first instance of taking some other symbol. If the sequence is all 1's then $\tilde\gamma_1(-\infty) = \gamma_{1}^{\tilde E}(-\infty)$ -- which can be connected to any of $\gamma_j^{\tilde E}(\pm\infty)$ as in case 1. If the sequence has $N$ ones and then some other symbol, $a$, then the $\tilde\gamma_1(-\infty) = \gamma_1^{\tilde E^N}(-\infty)$ and for a seam lifted to $\tilde E$, it either has some horocycles entirely contained in $\tilde E_{a}^N$ or equivalent to those of $\gamma_a^{\tilde E^N}$ -- which can all be made disjoint from horocycles of $\gamma_1^{\tilde E^N}(-\infty) = \tilde\gamma_1(-\infty)$.
\qed
\end{proof}

\section{Proof of theorem \ref{thm:syz}}
\label{sec:prf}

Now we consider the pair of pants, $\Sigma$, equipped with the non-positively curved reduced JM-metric. This metric is complete and (\cite{MPants} pg. 10) asymptotes to finite diameter cylinders around the collisions. In particular, $\Sigma$ satisfies the hypotheses of Lemma \ref{Lem:vis} by taking the 'seams' to be the collinear arcs. The proof of theorem \ref{thm:syz} consists of applying Lemma \ref{Lem:vis} to construct straight collision orbits realizing a given stutter free finite syzgy sequence, and then applying Toponogov's theorem\footnote{One form of this theorem states that a geodesic triangle in a non-positively curved manifold with interior angles $\alpha_i$ has $\alpha_1 + \alpha_2 + \alpha_3 \le\pi$ with equality only when the triangle bounds a region of zero curvature (see \cite{B} \S 1 B, in particular the consequence on pg. 8)} to show uniqueness. Finally one may obtain winding collision orbits by perturbing the straight collision orbits.

\begin{proof}[of theorem \ref{thm:syz}]
\smartqed
It is useful to first see how Lemma \ref{Lem:vis} is used to construct a straight collision orbit realizing the sequence $31$. We recall that -- due to the non-positive curvature -- two forwards or backwards asymptotic geodesics intersecting in a point are in fact the same geodesic.

Consider a fixed fundamental domain (centered in figure \ref{fig:31}). To obtain the first 3 in the sequence we can aim to cross the collinear arc 3 in this fundamental domain from 'top to bottom'. Then to obtain the following '1' in the sequence we want to pass next into the lower left fundamental domain. Now if there are to be no other syzygies in the sequence we must exit each of these fundamental domains down an appropriate leg: that is be backwards asymptotic to $x$ and forwards asymptotic to $y$ in the figure. By Lemma \ref{Lem:vis}, there exists a geodesic from $x$ to $y$. This geodesic cannot pass through the upper left or upper right regions without being trapped in them (since leaving these fundamental domains requires passing through a collinear arc asymptotic to $x$ -- forcing the geodesic to equal this collinear arc) nor can it pass through the lower right region since then it intersects the '2' collinear arc twice: which is not possible for two geodesics in a non-positively curved Hadamard manifold. Hence it passes from the centered fundamental domain to the lower left fundamental domain, and -- because it cannot cross any collinear arcs which it is asymptotic to without being equal to them -- realizes the syzygy sequence 31.

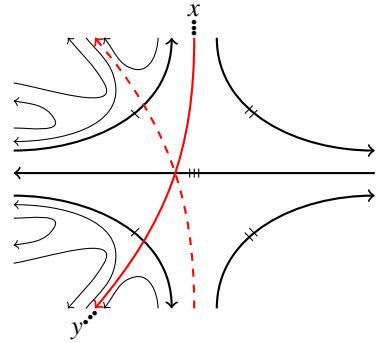
\begin{SCfigure}[2][!hbt]
    \begin{tikzpicture}[scale = .6]
    
    \node at (0,3.1) {\textbf{.}};
    \node at (0,3.22) {\textbf{.}};
    \node at (0,3.34) {\textbf{.}};
    \node at (0,3.6) {$x$};

  \draw[thick, color=black, <-] (-4,0) -- (4,0);
  
  
  \draw[color=black] (-.1,-.1) -- (-.1,.1);
  \draw[color=black] (0,-.1) -- (0,.1);
  \draw[color=black] (.1,-.1) -- (.1,.1);

  \draw [thick, color = black, ->] (-4,.5) to[out=0,in= -90] (-.5,3);
  \draw[color=black] (-1.4,1.4) -- (-1.22,1.22);

  \draw [thick, color = black, ->] (.5,3) to[out=-90,in= -180] (4,.5);
  \draw[color=black] (1.4,1.4) -- (1.22,1.22);
  \draw[color=black] (1.3,1.5) -- (1.12,1.32);

  \draw [thick, color = black, <-] (4,-.5) to[out=-180,in= 90] (.5,-3);
  \draw[color=black] (-1.4,-1.4) -- (-1.22,-1.22);

  \draw [thick, color = black, ->] (-4,-.5) to[out=0,in= 90] (-.5,-3);
  \draw[color=black] (1.4,-1.4) -- (1.22,-1.22);
  \draw[color=black] (1.3,-1.5) -- (1.12,-1.32);

  \draw [color = black, ->] (-4,-1) to[out=0,in= 60] (-3.1,-1.2) to[out=-120,in=10] (-4,-1.6);

  \draw [color = black, ->] (-4,-2) to[out=10,in= 60] (-2,-2) to[out=-120,in=50] (-2.8,-3);

  \draw [color = black, <-] (-2,-3) to[out=50,in= -170] (-1.3,-2.3) to[out=10,in=92] (-.8,-3);
  
  \draw [color = black, <-] (-4,-.7) to[out=0,in= 80] (-1.75,-2) to[out=-100,in=50] (-2.4,-3);
  
 \node at (-2.2,-3.1) {\textbf{.}};
\node at (-2.3,-3.2) {\textbf{.}};
\node at (-2.4,-3.3) {\textbf{.}};
\node at (-2.6,-3.5) {$y$};

  \draw [thick, color = red, ->] (0,3) to[out=-90,in=50] (-2.2,-3);

  \draw [color = black, ->] (-4,1) to[out=0,in= -60] (-3.1,1.2) to[out=120,in=-10] (-4,1.6);

  \draw [color = black, ->] (-4,2) to[out=-10,in= -60] (-2,2) to[out=120,in=-50] (-2.8,3);

  \draw [color = black, <-] (-2,3) to[out=-50,in= 170] (-1.3,2.3) to[out=-10,in=-92] (-.8,3);
  
  \draw [color = black, <-] (-4,.7) to[out=0,in= -80] (-1.75,2) to[out=100,in=-50] (-2.4,3);

  \draw [thick, dashed, color = red, ->] (0,-3) to[out=90,in=-50] (-2.2,3);

\end{tikzpicture}
\caption{Two straight collision geodesics (red) realizing the sequence $31$ (we use tick marks on the collinear arcs lifts in place of 1,2,3 to avoid cluttering the diagram). They are related by the symmetry of $\Sigma$ induced by a reflection in the plane containing the three bodies.}
\label{fig:31}
    \end{SCfigure}

One proceeds in the same way in general: associate to the finite syzygy sequence a corresponding finite sequence of fundamental domains in $H$ to pass through. In the first and last domains of this list, there will be one choice of end to shoot down, and then one invokes Lemma \ref{Lem:vis} to get a geodesic $\gamma$ connecting these two points of $H(\infty)$. Finally, using that forward asymptotic geodesics cannot intersect, nor can any two geodesics intersect more than once in $H$, we see that $\gamma$ indeed realizes the given syzygy sequence.

To see the orbit $\gamma$ is unique (up to the reflection symmetry), note that -- due to the finite diameter ends -- any other geodesic realizing the same syzygy sequence as $\gamma$ and passing through the same tiling sequence as $\gamma$ will be forward and backwards asymptotic to $\gamma$. It follows from Toponogov's theorem (\cite{B} pg. 8) that these two geodesics bound a flat strip, which contradicts that the JM-metric on $\Sigma$ is negative away from a discrete set.

Finally, we consider some winding collision orbits (see figure \ref{fig:winding}). Let $s_1...s_k$ be a finite stutter free syzygy sequence and $\gamma(t)$ a realizing geodesic. Varying the initial velocity $\dot\gamma(0)$ a sufficiently small amount from $\dot\gamma(0)$ one obtains -- since the ends asymptote to cylinders -- a geodesic $\hat\gamma$ which still begins and ends in the same collisions as $\gamma$ and -- by continuous dependence on initial conditions -- crosses $s_1...s_k$ before being sucked down the legs. However, as they are distinct and share a point, the lift of $\hat\gamma$ is not forwards or backwards asymptotic to the lift of $\gamma$, so as $\hat\gamma$ goes down the legs it will pick up the appropriate winding sequence.

\qed
\end{proof}

\begin{SCfigure}[2][!hbt]
    \begin{tikzpicture}[scale = .7]

  \draw[color=black] (-1,2) to [bend right=20] (3,.3);
  
  \node[below] at (.5,1.5) {3};

  \draw[color=black] (3,-.3) to [bend right=20] (-1,-2);
  
  \node at (.5,-1.1) {2};
  
  \draw[color=black] (-1.5,-1.9) to [bend right=20] (-1.5,1.9);
  
  \node at (-1.5,0) {1};
  
  \draw [thick, color = black] (-1.1,.1) to[out=70,in= -180] (0,.3) to[out= 0,in=-180] (3,.1);
  
  
  \draw [dashed, thick, color = black] (-1.1,.1) to[out=-110,in=-180] (0,-.3) to[out=0,in=-180] (3,-.1);

  \draw [thick, color = red] (-1.15,.22) to[out=70,in= 160] (0,.3) to[out= -20,in=-180] (2.5,-.34);
  
  \draw [dashed,thick,  color = red] (2.5, -.34) to[out=0,in= -100] (3,-.1);
  
  \draw [dashed, thick, color = red] (-1.15,.22) to[out=-110,in= 190] (0,-.1) to[out= 10,in=-180] (2.4,.355);
  
  \draw [thick, color = red] (2.4, .355) to[out=0,in= 170] (3,.2);

\end{tikzpicture}
\caption{Perturbing a straight collision orbit with syzygy sequence 1 (an isosceles solution) to get a winding orbit (red).}
\label{fig:winding}
    \end{SCfigure}
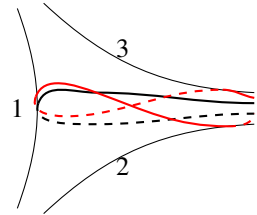

\begin{acknowledgement}
I thank Richard Montgomery for helpful comments as well as Josu\'e Mel\'endez for sharing some numerical experiments from \cite{Bill}, which inspired this work.
\end{acknowledgement}


\begin{thebibliography}{99.}%
%
%
%
%
%
%
%

%
%
%
%
%
%
%
%
%
%
%
%
%
%
%
%
%
%
%
%
%
%
%
%
%
%
%
%
%
%
%
%
%

\bibitem{AlbHom} Albouy, A. (2015). On the force fields which are homogeneous of degree− 3. In Extended Abstracts Spring 2014 (pp. 3-7). Birkh\"auser, Cham.

\bibitem{Bill} \'Alvarez-Ram\'irez, M.,  Garc\'ia, A., \& Mel\'endez, J. The three-body problem as a geodesic billiard map with singularities. to appear.


\bibitem{Arn} Arnol'd, V. I. (2013). Mathematical methods of classical mechanics (Vol. 60). Springer Science \& Business Media.

\bibitem{B} Ballmann, W. (1985). Manifolds of non positive curvature. In Arbeitstagung Bonn 1984 (pp. 261-268). Springer, Berlin, Heidelberg.

\bibitem{EOVis} Eberlein, P., \& O'Neill, B. (1973). Visibility manifolds. Pacific Journal of Mathematics, 46(1), 45-109.

\bibitem{shirts} Jackman, C., \& Mel\'endez, J. (2018). Hyperbolic Shirts fit a 4-body problem. Journal of Geometry and Physics, 123, 173-183.

\bibitem{MPants} Montgomery, R. (2005). Fitting hyperbolic pants to a three-body problem. Ergodic Theory and Dynamical Systems, 25(3), 921-947.

\bibitem{P} Poincar\'e, H. (1896). Sur les solutions p\'eriodiques et le principe de moindre action. CR Acad. Sci. Paris, 123, 915-918.
\end{thebibliography}
\end{document}